\documentclass{article}
\usepackage{amsmath,bbm} 
\usepackage{amssymb}
\usepackage[english]{babel}
\usepackage{amsthm}
\usepackage{hyperref}
\usepackage{xcolor}
\usepackage{tabularx}
\usepackage[
backend=biber,
style=alphabetic,
]{biblatex}
\usepackage{csquotes}
\newtheorem{theorem}{Theorem}[section]

\newtheorem{lemma}[theorem]{Lemma}
\newtheorem{proposition}[theorem]{Proposition}
\newtheorem*{conjecture}{Conjecture}
\newtheorem*{remark}{Remark}
\author{Tang Wuji}
\begin{document}

\title{Distribution of squarefree integers with double congruence conditions}

\maketitle
\begin{abstract}
    Given a positive integer $M$ with reduced residue system $\Phi_M$, we partition the set of squarefree integers according to some congruence conditions on the number of prime factors in each class $\phi \in \Phi_M$, and prove two relevant equidsitribution results in this setting. We discover the bias phenomenon in some special cases, and formulate a conjecture with numerical support.
\end{abstract}

\section{Introduction}
%\textcolor{blue}{change k into M and a mod k into b mod M}
Let $S(x;b,M)$ denote the number of squarefree numbers that are $\leq x$ and congruent to $b$, modulo $M$, where $(b,M)=1$. In 1885 Gegenbauer\cite{zbMATH02514898} proved that squarefree integers have a density of $\frac{6}{\pi^2}$ among all integers, that is $$S(x;0,1)\sim\frac{6x}{\pi^2}\quad (x\rightarrow\infty).$$
Landau\cite{Landau} proved in 1909 that 
\begin{equation}\label{0}
    S(x;b,M)\sim\frac{6x}{\pi^2}\cdot\frac{1}{M}\cdot\prod_{p|k}\left(1-\frac{1}{p^2}\right)^{-1}\quad (x\rightarrow\infty).
\end{equation}
In 1974 Hooley\cite{Hooley1975} refined this result to 
$$S(x;b,M)=\frac{6x}{\pi^2}\cdot\frac{1}{M}\cdot\prod_{p|M}\left(1-\frac{1}{p^2}\right)^{-1}+O\left(\sqrt{\frac{x}{M}}+M^{\frac{1}{2}+\epsilon}\right).$$
This is the best possible individual error term bound we have obtained so far. On the other hand, the square mean of the error term, i.e.,
$$M_2(x,M):=\sum_{b\in(\mathbb{Z}\slash M\mathbb{Z})^\star}\left|S(x;b,M)-\frac{6x}{\pi^2}\cdot\frac{1}{M}\cdot\prod_{p|M}\left(1-\frac{1}{p^2}\right)^{-1}\right|^2,$$
are studied by many people including Valentin Blomer, Ramon M. Nunes, Pierre Le Boudec. The best estimates can be found in \cite{nunes2014squarefreenumbersarithmeticprogressions} and \cite{boudec2014distributionsquarefreeintegersarithmetic}. 

Here we will consider a new distribution pattern of squarefree integers.
Let $M$ be an integer $\geq 3$, $m:=\phi(M)$, where $\phi$ is the Euler totient function, that is $m=|(\mathbb{Z}\slash M\mathbb{Z})^{\star}|$.
Let $\{m_1,\dots,m_m\}\subset \mathbb{Z}^m$ be a set of representatives of $(\mathbb{Z}\slash M\mathbb{Z})^{\star}$. 
Given $N$ squarefree and coprime with M, write $N=p_1\dots p_k$,where $p_1,\dots,p_k$ are the prime divisors of $N$. Define $n_i=n_i(N):=\#\{p_i\equiv m_i \pmod{M}\}$.

The first main result of this paper is an equidistribution.
\begin{theorem}\label{a}
Let $\{ l_i\}_{1\leq i \leq m}, l_i\in \mathbb{Z}_{\geq 1}$, and $a=(a_1,\dots,a_m)$ where $0\leq a_i\leq l_i-1$. Let $L:= \prod_{i=1}^{m} l_i$.
Define:
\begin{multline*}
    A_a(x):=\{N\text{ is squarefree and coprime with }M.\\
    n_i \equiv a_i \pmod{l_i},\quad N\leq x\}.
\end{multline*}
Then we have
$${|A_a(x)|}\sim\frac{6x}{\pi^2\cdot L\cdot \prod_{\substack{p\ prime\\p|M}}(1+p^{-1})},\ x\rightarrow\infty.$$
\end{theorem}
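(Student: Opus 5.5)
We detect the congruence conditions with roots of unity and reduce the problem to mean values of twisted multiplicative functions. Let $\zeta_{l}=e^{2\pi i/l}$. Orthogonality of additive characters gives
$$\mathbbm{1}_{A_a}(N)=\frac{1}{L}\sum_{t_1=0}^{l_1-1}\cdots\sum_{t_m=0}^{l_m-1}\prod_{i=1}^m \zeta_{l_i}^{-t_ia_i}\,\zeta_{l_i}^{t_in_i(N)}$$
for squarefree $N$ coprime to $M$. For each tuple $t=(t_1,\dots,t_m)$ define the multiplicative function $f_t$, supported on squarefree integers coprime to $M$, with $f_t(p)=\zeta_{l_i}^{t_i}$ when $p\equiv m_i \pmod M$. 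Then $f_t(N)=\prod_i\zeta_{l_i}^{t_in_i(N)}$ and
$$|A_a(x)|=\frac{1}{L}\sum_{t}\Big(\prod_i\zeta_{l_i}^{-t_ia_i}\Big)\sum_{N\le x}f_t(N).$$

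The trivial tuple $t=0$ gives the main term. Here $f_0$ is the indicator of squarefree integers coprime to $M$, and its count is classically
$$\frac{6x}{\pi^2}\prod_{p\mid M}\frac{1-p^{-1}}{1-p^{-2}}+O(\sqrt{x})=\frac{6x}{\pi^2\prod_{p\mid M}(1+p^{-1})}+O(\sqrt x).$$
This follows, for example, from $\sum_{N}f_0(N)N^{-s}=\zeta(s)\zeta(2s)^{-1}\prod_{p\mid M}(1+p^{-s})^{-1}$ together with a convolution argument. Dividing by $L$ gives exactly the asserted main term.

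It remains to show $\sum_{N\le x}f_t(N)=o(x)$ for every $t\neq 0$. Expand $f_t(p)$ in Dirichlet characters mod $M$:
$$f_t(p)=\sum_{\chi}c_\chi\chi(p),\qquad c_\chi=\frac{1}{m}\sum_{i}\zeta_{l_i}^{t_i}\overline{\chi}(m_i).$$
Then
$$F_t(s):=\sum_N f_t(N)N^{-s}=\prod_{\chi}L(s,\chi)^{c_\chi}\,G_t(s),$$
where $G_t$ is holomorphic and bounded in $\Re s>1/2+\delta$. The coefficient $c_{\chi_0}=\frac1m\sum_i\zeta_{l_i}^{t_i}$ has $|c_{\chi_0}|\le 1$, with equality only when all $\zeta_{l_i}^{t_i}$ are equal. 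When $t\neq0$, at least one $\zeta_{l_i}^{t_i}\ne1$, so if they are all equal they all equal some $\omega\ne 1$, and then $c_{\chi_0}=\omega$ with $\Re(\omega)<1$. In every case $\Re(c_{\chi_0})<1$.

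I would then apply the Selberg–Delange method, as in Tenenbaum's book, Chapter II.5. Near $s=1$ we have $F_t(s)\approx (s-1)^{-c_{\chi_0}}H(s)$ with $H$ analytic. The other $L(s,\chi)^{c_\chi}$ are holomorphic and zero-free in a classical zero-free region, since $M$ is fixed. This yields
$$\sum_{N\le x}f_t(N)\ll x(\log x)^{\Re c_{\chi_0}-1},$$
which is $o(x)$. A simpler alternative is Halász's theorem: $\sum_{N\le x}f_t(N)=o(x)$ unless $f_t$ pretends to be $n^{i\tau}$. The distance computation $\sum_p(1-\Re f_t(p)p^{-i\tau})/p=\infty$ reduces, via the prime number theorem in arithmetic progressions, to $\Re c_{\chi_0}<1$ for $\tau=0$, and to the non-vanishing of $L(1+i\tau,\chi)$ for $\tau\ne0$.

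I expect the main obstacle to be verifying the hypotheses rigorously in the nontrivial case. One needs the uniform analytic continuation of $\prod_\chi L(s,\chi)^{c_\chi}$ with complex exponents, with branches chosen in the zero-free region, and must handle the case $|c_{\chi_0}|=1$, $c_{\chi_0}\ne1$, where the singularity is a pure power. Halász's theorem avoids these continuation issues: it suffices to check $\min_{|\tau|\le T}\sum_{p\le x}(1-\Re f_t(p)p^{-i\tau})/p\to\infty$. For this I would use Mertens' theorem in progressions for small $|\tau|$ and the standard bound $\sum_p \chi(p)p^{-1-i\tau}=O(\log(1/|\tau|)\text{ or } \log\log |\tau|)$ for large $|\tau|$. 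Combining the $t=0$ term with the $L-1$ error terms, each $o(x)$, proves the theorem.
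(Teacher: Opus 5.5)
Your proposal is correct and follows essentially the same route as the paper: a roots-of-unity filter, then factoring each twisted Euler product through Dirichlet $L$-functions to get a singularity of order $\frac1m\sum_i\zeta_{l_i}^{t_i}$ (real part $<1$ for $t\neq0$) handled by Selberg--Delange, and finally extracting the main term from $\zeta(s)\zeta(2s)^{-1}\prod_{p\mid M}(1+p^{-s})^{-1}$. Your worry about $|c_{\chi_0}|=1$ is unnecessary, since Selberg--Delange holds for every complex $z$ (for non-positive integer $z$ the main term just vanishes because $1/\Gamma(z)=0$), and the Hal\'asz variant you sketch is also a valid way to get $o(x)$ without analytic continuation.
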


Theorem \ref{a} shows that $\{A_a(x)\}$ are equal-distributed. We also want to consider bias among $\{A_a(x)\}$, that is, given distinct $a,\ a'$, does the sign of $|A_a(x)|-|A_{a'}(x)|$ stablize as $x\rightarrow \infty$? Generally, there is no bias among $\{A_a(x)\}$. However in \S\ref{7} we discover that under certain conditions there should be a strict bias among $\{A_a(x)\}$. The computation there leads us to raise the following conjecture.
\begin{conjecture}
When $\phi(M)\geq 3, l_i\leq 2$ for all $i$, there exists a strict bias among $\{A_a(x)\}$. That is, given distinct $a,\ a'$, the sign of $|A_a(x)|-|A_{a'}(x)|$ always stablizes as $x\rightarrow \infty$.
\end{conjecture}

%However we will prove that given proper condition for $\{l_i\}$ and $M$, it can be shown that sometimes there are strict bias among $\{A_a(x)\}$, and sometimes there are not. We conjecture in section \ref{5} about the conditions for a strict bias among $\{A_a(x)\}$.

The second main result of this paper studies a "finite" version of $A_a(x)$.
%\begin{remark}
%    The problem considered here is a refinement of estimate for $S(x;a,k)$. In fact, if we choose $l_i:=|<m_i>_{(\mathbb{Z}\backslash M\mathbb{Z})^{\star}}|$ to be the order of $m_i$ in the group $(\mathbb{Z}\backslash M\mathbb{Z})^{\star}$, then elements in $A_a(x)$ are congurent to a constant modulo $M$.
%\end{remark}
%Furthermore, given distinct $a,a'$, the sign of $|A_a(x)|-|A_a'(x)|$ changes infinitely many times as $x\rightarrow \infty$.

\begin{theorem}\label{b}

Let $a=(a_1,\dots,a_m)$, where $a_i\in\mathbb{Z}_{\geq 0}$. Write $A:=\sum_{i=1}^{m}a_i$\\
Define:
\begin{multline*}
    A^a(x):=\{N\text{ is squarefree and coprime with }M.\ n_i=a_i ,\ N\leq x\}.
\end{multline*}
Then we have
$${|A^a(x)|}\sim \frac{A}{a_1!\dots a_m!\cdot m^A}\cdot x \cdot (log\ x)^{-1}\cdot (log\ log\ x)^{A-1}, x\rightarrow\infty.$$
    
\end{theorem}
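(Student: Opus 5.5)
The plan is to use the Selberg--Delange method with one complex variable for each residue class. Fix complex parameters $z=(z_1,\dots,z_m)$ with $|z_i|\le r$ for some small fixed $r>0$. For $N$ squarefree and coprime with $M$, define the multiplicative weight $z^{n(N)}:=\prod_i z_i^{n_i(N)}$, and set
$$F(s;z):=\sum_{\substack{N \text{ squarefree}\\ (N,M)=1}} z^{n(N)}N^{-s}=\prod_{i=1}^m\prod_{p\equiv m_i \ (M)}\left(1+z_ip^{-s}\right).$$
Then $|A^a(x)|$ is the coefficient of $z_1^{a_1}\cdots z_m^{a_m}$ in $S(x;z):=\sum_{N\le x}z^{n(N)}$, where the sum runs over the same set of $N$. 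By orthogonality of Dirichlet characters modulo $M$,
$$\sum_{p\equiv m_i}p^{-s}=\frac1m\sum_{\chi}\overline{\chi}(m_i)\log L(s,\chi)+H_i(s),$$
with $H_i$ holomorphic in $\Re s>1/2$. Hence
$$F(s;z)=\zeta(s)^{w}\,G(s;z),\qquad w:=\frac{z_1+\dots+z_m}{m}.$$
Here $G(s;z)$ collects the factor $\prod_{p\mid M}(1-p^{-s})^{w}$, the powers $L(s,\chi)^{(\cdot)}$ for non-principal $\chi$, and an absolutely convergent Euler product. It is holomorphic and nonvanishing in a zero-free region of the form $\sigma>1-c/\log(|t|+2)$, and it is uniformly bounded there for $|z_i|\le r$ because the $L(s,\chi)$ have no zeros in that region. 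This step is where the prime number theorem in arithmetic progressions enters.

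Next I would apply the Selberg--Delange theorem (a Hankel contour around $s=1$), with uniformity in the parameter $z$. It gives
$$S(x;z)=\frac{x}{\log x}\left(\frac{G(1;z)}{\Gamma(w)}(\log x)^{w}+O\!\left((\log x)^{\Re w-1}\right)\right),$$
uniformly for $|z_i|\le r$, and the main term is holomorphic in $z$. Write $L_2:=\log\log x$ and $(\log x)^{w}=e^{wL_2}$. I would extract the coefficient of $z^a$ by Cauchy's formula on the polycircle $|z_i|=\rho_i$. A fixed small radius is not enough for the error terms. Instead I would choose $\rho_i\asymp a_i/L_2$, the saddle-point radius, or simply keep $|z_i|\le r$ and note that the error carries an extra factor $(\log x)^{-1}$. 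With such a choice the error contributes $o\big(x(\log x)^{-1}L_2^{A-1}\big)$.

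For the main term, $1/\Gamma(w)=w+O(|w|^2)$ and $G(1;z)=G(1;0)+O(|z|)$. Since $F(s;0)=1$, we have $G(1;0)=1$. The coefficient of $z^a$ in $w\,e^{wL_2}$ equals the coefficient of $z^a$ in
$$\sum_{k\ge0}\frac{L_2^k}{k!}\,w^{k+1}.$$
Only $k=A-1$ contributes, giving
$$\frac{L_2^{A-1}}{(A-1)!}\cdot\frac{A!}{a_1!\cdots a_m!}\,m^{-A}=\frac{A}{a_1!\cdots a_m!\,m^A}L_2^{A-1}.$$
This is exactly the constant in Theorem \ref{b}. The remaining terms, those carrying $O(|w|^2)$ from $1/\Gamma$ or $O(|z|)$ from $G$, produce coefficients with a strictly lower power of $L_2$, so they are negligible.

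I expect the main obstacle to be the uniform Selberg--Delange step. It requires bounds for $G(s;z)$ and for $\zeta(s)^{w}$ on the truncated Hankel contour that hold uniformly in $z$, which in turn need the zero-free region and growth bounds for all $L(s,\chi)$ modulo $M$. If that route becomes heavy, I would fall back on Landau's inductive method as an alternative. For squarefree $N$ counted in $A^a(x)$, counting pairs $(N,p)$ with $p\mid N$ and $p\equiv m_i$ gives
$$a_i|A^a(x)|=\sum_{p\equiv m_i}|A^{a-e_i}(x/p)|+O(\text{non-squarefree terms}).$$
I would then induct on $A$, using Dirichlet's theorem in the quantitative form $\sum_{p\le y,\,p\equiv m_i}1/p=\frac1m\log\log y+O(1)$. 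In that approach the delicate point is evaluating $\sum_p \frac{x}{p\log(x/p)}(\log\log(x/p))^{A-2}$ over the full range of $p$, not just $p\le x^{\varepsilon}$. Large $p$ contribute to the main term, so the range must be handled by partial summation against $\log\log$.
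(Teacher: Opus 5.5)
Your proposal is correct, but it takes a genuinely different route from the paper's.

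\textbf{The paper's route.} The paper uses no auxiliary variables $z_i$ and no Cauchy extraction. It factors $L_{A^a}(s)=\prod_i L_{A^{\hat a_i}}(s)$ and approximates each factor by $\frac{1}{a_i!}\bigl(\sum_{p\equiv m_i}p^{-s}\bigr)^{a_i}$; the terms containing some $p^{-2s}$ are of lower order in $\log\frac{1}{s-1}$. It then inserts $\sum_{p\equiv m_i}p^{-s}=\frac1m\log\frac{1}{s-1}+O(1)$ to get $L_{A^a}(s)=\frac{1}{m^A a_1!\cdots a_m!}\bigl(\log\frac{1}{s-1}\bigr)^A+O\bigl((\log\frac{1}{s-1})^{A-1}\bigr)$. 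Finally it applies Delange's Tauberian theorem for logarithmic singularities (Lemma \ref{2}, with its parameters $w=-1$ and $q=A$), which produces the factor $A$ and the power $(\log\log x)^{A-1}$ at once.

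That route needs only soft input: nonnegative coefficients, $L(1+it,\chi)\neq 0$, and the shape of the singularity at $s=1$. To meet the hypothesis of Lemma \ref{2} exactly, one writes $L_{A^{\hat a_i}}$ as a polynomial in the power sums $\sum_{p\equiv m_i}p^{-ks}$. In exchange it gives only the leading asymptotic.

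\textbf{Your route.} Your Selberg--Delange (Sathe--Selberg) argument costs more: a zero-free region and growth bounds for all $L(s,\chi)$ mod $M$, uniformly in $z$. It also gives more:
\begin{itemize}
\item the whole main term, as a polynomial of degree $A-1$ in $\log\log x$, with error $O(x(\log x)^{r-2})$;
\item some uniformity in $a$ if you take saddle-point radii;
\item a single framework that also covers Theorem \ref{a}, since $F(s;z)$ at $z_i=e_i^{j_i}$ is exactly the paper's $L_j(s)$.
\end{itemize}

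\textbf{Small corrections.} First, your remark that a fixed radius is not enough is mistaken. For fixed $a$, a fixed $r<1$ suffices: Cauchy's estimate bounds the error's contribution to the $z^a$-coefficient by $\ll r^{-A}x(\log x)^{r-2}=o(x/\log x)$, and the main-term coefficient is computed exactly from the power series. Saddle-point radii matter only if you want uniformity in $a$.

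Second, $G(s;z)$ is not bounded in the zero-free region. It is only $\ll(\log(|t|+3))^{O(r)}$ uniformly in $z$, which is what property $P$ actually requires.

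Third, both your argument and the statement tacitly need $A\ge 1$, since $A^{(0,\dots,0)}(x)=\{1\}$.
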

\begin{remark}
    Given distinct $a=(a_1,\dots,a_m),a'=(a_1',\dots,a_m')$, under the condition $\sum a_i=\sum a_i'=A$, we have:
$$\lim_{x\rightarrow\infty}|A^a(x)|:|A^{a'}(x)|=\binom{A}{a_1,a_2,\dots,a_n}:\binom{A}{a'_1,a'_2,\dots,a'_n}.$$
Where $\binom{A}{a_1,a_2,\dots,a_n}$ is the multinomial coefficient and equal to $\frac{A!}{a_1!\dots a_m!}.$

\end{remark}
The estimate of $A_a(x)$ refines the estimate of $S(x;b,M)$. More precisely, let 
$$B(x;b,M):= \left\{N\leq x \middle|\ N\ is\ squarefree, \ N\equiv b \pmod M\right\}$$
so that $S(x;b,M)=|B(x;b,M)|$. 
In the setting of Theorem \ref{a}, we choose $l_i:=|<m_i>_{(\mathbb{Z}\slash M\mathbb{Z})^{\star}}|$ to be the  order of $m_i$ in the group $(\mathbb{Z}\slash M\mathbb{Z})^{\star}$. Let $A$ denote the set of all $a$. We can give $A$ an additive group structure by identifying $A$ with $\prod_{i=1}^{m}\left(\mathbb{Z}\slash l_i\mathbb{Z}\right)$. Then 
 $$h(a):=\prod_{i=1}^{m}m_i^{a_i} \pmod M$$ gives a group homomorphism from $A$ to $(\mathbb{Z}\slash M\mathbb{Z})^{\star}$. We claim that
 \begin{equation}
    B(x;b,M)=\bigsqcup_{h(a)=b\pmod M}A_a(x).
 \end{equation}
For $\forall N\in \bigsqcup_{h(a)=b\pmod M}A_a(x)$, $N\in A_a(x)$ for some $a=(a_1,\dots, a_m)$.
Write $N=p_1\dots p_k$, where $p_1,\dots,p_k$ are the prime divisors of $N$. Recall $n_i=\#\{p_i\equiv m_i \pmod{M}\}$, and $n_i \equiv a_i \pmod{l_i}$. Let $n_i=a_i+b_i l_i$. For $1\leq i \leq m$, let
  $$P_i:=\prod_{\substack{p|N\\p\equiv m_i \mod M}}p.$$
 Then $N=\prod_{i=1}^{m}P_i$. $P_i \equiv m_i^{n_i} \pmod M \equiv m_i^{a_i+b_i l_i} \pmod M$. Since $l_i$ is the order of $m_i$ in the group $(\mathbb{Z}\slash M\mathbb{Z})^{\star}$, $m_i^{l_i}\equiv 1 \pmod M.$ This gives $N\equiv \prod_{i=1}^{m}m_i^{a_i}\pmod M\equiv b\pmod M$, and $N\in B(x;b,M)$.

 Conversely, given $N\in B(x;b,M)$, decompose $N$ and introduce $n_i$, $a_i$, $P_i$ as above. Then $N\in A_a(x)$, with $a=(a_1,\dots,a_m)$, and
 $$h(a)= \prod_{i=1}^{m}m_i^{a_i}\equiv \prod_{i=1}^{m}m_i^{n_i}\equiv \prod_{i=1}^{m}P_i= N\equiv b\pmod{M}$$
 
 %then each $a=(a_1,a_2,\dots,a_n)$ corrosponds to solutions of $m_0=\prod_{i=1}^{m}m_i^{a_i}$.

 Assuming Theorem \ref{a}, we can deduce the main term of $S(x;b,M)$ given by \eqref{0} as follows. Define 
 $$A(m_i):=\{a=(a_1,\dots,a_m)|\  0\leq a_i\leq l_i-1,\ \forall N\in A_a,\ N\equiv m_i \pmod M\}$$
 Each $A(m_i)=h^{-1}(m_i)$ is a coset of $A\slash A(1)$, hence independently of $i$, we have
 $$|A(m_i)|=\frac{|A|}{m}=\frac{L}{\phi(M)}=\frac{L}{M\prod_{p|M}(1-p^{-1})}.$$ 
 This gives $$S(x;b,M)\sim |A(b)|\cdot \frac{6x}{\pi^2\cdot L\cdot \prod_{p|M}(1+p^{-1})}=\frac{6x}{\pi^2}\cdot\frac{1}{M}\cdot\prod_{p|M}\left(1-\frac{1}{p^2}\right)^{-1},\ x\rightarrow\infty.$$

\begin{remark}
    Theorem \ref{a} and Theorem \ref{b} can be extended to $n$-free numbers.
\end{remark}

%We will show that $|A(m_i)|$ is constant for all $i$. Given a specific $i$, let $\hat{a_i}:=(0,\ldots,0,1,0,\dots,0)$, where $1$ is in the $i$-th component. Obviously $\hat{a_i}\in A(m_i)$. For any $a_i\in A(m_i)$, $a_i-$
 %A_a(x) are congurent to a constant modulo $M$.
\section{Tauberian Theorems}

We need the following lemmas which connect the pole of a Dirichlet series with the order of growth of its partial sum of its coefficients.
The first lemma is due to Selberg-Delange \cite{tenenbaum2015introduction}:\\
Let $z\in \mathbb{C}, 0<\delta\leq 1, M>0.$ We say that a Dirichlet series $F(s)$ has property $P(z;c_0,\delta,M)$ if the Dirichlet series $G(s;z):=F(s)\zeta(s)^{-z}$ may be analytically continued to $\sigma\geq 1-c_0/(1+log^+(\tau))$, and, in this domain, satifies the bound
$$|G(s;z)|\leq M(1+|\tau|)^{1-\sigma}.$$
We will say $F(s)$ has a pole of order $z$ at $s=1$. Here $s=\sigma+i \tau\in \mathbb{C}, log^+(x):=Max(0, log(x)).$\\
If $F:=\sum_{n=0}^{\infty}a_n n^{-s}$ has property $P(z;c_0,\delta,M)$ and there exist real numbers $\{b_n\}_{n=1}^\infty$ such that $|a_n|\leq b_n$ and $\sum_{n=1}^\infty b_n n^{-s}$ satisfies $P(w,c_0,\delta,M)$ for some $w\in \mathbb{C}.$ We say $F(s)$ has property $T(z,w;c_0,\delta,M).$
\begin{lemma}
    \label{1}
Given $F(s)=\sum_{n=1}^\infty a_n n^{-s}$ with property $T(z,w;c_0,\delta,M)$, we have:
\begin{equation*}
    \sum_{n\leq x}a_n = x(log\ x)^{z-1}\left[\sum_{k=0}^{N}\frac{\lambda_k(z)}{(log\ x)^k}+O(M R_N(x))\right],
\end{equation*}
    where $$R_N(x):=e^{-c_1 \sqrt{log x}}+(\tfrac{c_2 N +1}{log x})^{N+1},$$
    and $$\lambda_k(z):=\frac{1}{\Gamma(z-k)}\sum_{h+j=k} \frac{1}{h!j!}G^{(h)}(1;z)\gamma_j(z).$$

Here $\gamma_j(z)$ is defined by the Taylor series coefficient: 
\begin{equation*}
    s^{-1}((s-1)\zeta(s))^z=\sum_{j=0}^{\infty} \frac{1}{j!}\gamma_j(z)(s-1)^j.
\end{equation*}
And in the domain where $G(s;z)$ is holomorphic, we set
$$G^{(k)}(s;z):=\frac{\partial^k}{\partial s^k}G(s;z).$$
\end{lemma}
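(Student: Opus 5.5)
This is the Selberg–Delange theorem in the form of Tenenbaum, Theorem II.5.2. The plan follows the standard contour-integration proof, using the majorant condition built into property $T$ to control the truncation error.

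First I apply a truncated Perron formula to $F$, taking $\kappa = 1+1/\log x$ and truncation height $T$:
$$\sum_{n\le x}a_n=\frac{1}{2\pi i}\int_{\kappa-iT}^{\kappa+iT}F(s)\frac{x^s}{s}\,ds+O\Big(\sum_{n\ge1}\frac{|a_n|\,x^\kappa}{n^\kappa(1+T|\log(x/n)|)}\Big).$$
The error term is handled through the majorant $|a_n|\le b_n$. Since $\sum b_n n^{-s}$ has property $P(w;c_0,\delta,M)$, a crude bound for the partial sums $\sum_{n\le y}b_n\ll M y(\log y)^{|w|}$ follows, for instance via Rankin's trick or a Shiu-type estimate. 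This controls the contribution of $n$ near $x$ and gives an error of $O(Mx(\log x)^{O(1)}/T)$. I then choose $T=e^{c\sqrt{\log x}}$.

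Next I deform the line of integration to the left. I write $F(s)=G(s;z)\zeta(s)^z$ and move the contour into the zero-free region $\sigma\ge 1-c_0/(1+\log^+|\tau|)$. Because $\zeta(s)^z$ has a branch point at $s=1$, I replace the segment near $s=1$ by a truncated Hankel contour $\mathcal{H}$ wrapped around the cut $(1-r,1]$, with $r\asymp 1/\log x$ small. On the remaining pieces the bound $|G|\le M(1+|\tau|)^{1-\sigma}$ applies, together with the classical estimate $|\zeta(s)^{\pm z}|\ll(\log(2+|\tau|))^{|z|}$ in the zero-free region. The factor $x^\sigma$ with $\sigma\le 1-c/\log T$ then gives the contribution $O(Mxe^{-c_1\sqrt{\log x}})$.

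The main term is the Hankel integral
$$\frac{1}{2\pi i}\int_{\mathcal H}G(s;z)\,\big((s-1)\zeta(s)\big)^z s^{-1}(s-1)^{-z}x^s\,ds.$$
I expand $G(s;z)\,s^{-1}((s-1)\zeta(s))^z=\sum_k c_k(s-1)^k$ with $c_k=\sum_{h+j=k}G^{(h)}(1;z)\gamma_j(z)/(h!\,j!)$, truncating at $k=N$. The Cauchy bounds on a disc of radius $\asymp 1/\log(\ldots)$ give $|c_k|\le M(c_2k+1)^k$ up to constants. Integrating term by term with Hankel's formula
$$\frac{1}{2\pi i}\int_{\mathcal H}(s-1)^{k-z}x^s\,ds=\frac{x(\log x)^{z-k-1}}{\Gamma(z-k)}+O(\text{exponentially small})$$
yields exactly the terms $\lambda_k(z)x(\log x)^{z-1-k}$. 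The Taylor remainder of order $N+1$ contributes $O\big(Mx(\log x)^{z-1}((c_2N+1)/\log x)^{N+1}\big)$.

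I expect the main obstacle to be the uniformity in $N$ of this remainder. To handle it, I would choose the Hankel radius depending on $N$ and use Cauchy's inequality on discs of radius $\asymp 1/\log x$ around points near $1$. This requires the hypothesis that $G$ is bounded in a neighbourhood of $s=1$ with constant $M$. I would carry out this step carefully, following Tenenbaum's treatment.
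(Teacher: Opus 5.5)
The overall route (truncated Perron formula, shift into the zero-free region, truncated Hankel contour around $s=1$, termwise Hankel formula) is the standard Selberg--Delange proof. The paper does not prove this lemma; it only cites Tenenbaum. Your derivation of the terms $\lambda_k(z)x(\log x)^{z-1-k}$ is correct. The genuine gap is in the Perron truncation error. Split
\[
x^{\kappa}\sum_{n\ge1}\frac{|a_n|}{n^{\kappa}\,(1+T|\log(x/n)|)}
\]
at $|n-x|=x/\sqrt T$. The far part is $\ll x^{\kappa}T^{-1/2}\sum_n b_nn^{-\kappa}\ll Mx(\log x)^{|w|}T^{-1/2}$, which is fine. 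In the near part, however, the weights are not small (they are $\asymp 1$ for $|n-x|\le x/T$). So you need an upper bound for the majorant over a \emph{short interval}, something like $\sum_{|n-x|\le x/\sqrt T}b_n\ll Mxe^{-c\sqrt{\log x}}$.

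A bound $B(y):=\sum_{n\le y}b_n\ll My(\log y)^{|w|}$ for all $y$, which Rankin's trick does give, says nothing about $B(x+x/\sqrt T)-B(x-x/\sqrt T)$. The best it yields is $O(Mx(\log x)^{|w|})$, which is not even smaller than the main term. Shiu's theorem does give short-interval bounds, but it requires $b_n$ to be multiplicative with growth conditions at prime powers, and nothing of that kind is contained in $T(z,w;c_0,\delta,M)$. So your sentence ``this controls the contribution of $n$ near $x$'' is exactly the step that fails.

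The missing idea is that the majorant hypothesis $P(w)$ must be turned into a short-interval bound. One standard way is to write $F_b(s)=\sum b_nn^{-s}=G_b(s;w)\zeta(s)^w$ and use the Perron formula of order one,
\[
B_1(y):=\int_1^yB(t)\,dt=\frac{1}{2\pi i}\int_{\kappa-i\infty}^{\kappa+i\infty}F_b(s)\,\frac{y^{s+1}}{s(s+1)}\,ds ,
\]
which converges absolutely. Running your contour/Hankel argument on it gives $B_1(y)=m(y)+O(My^2e^{-c\sqrt{\log y}})$, where the Hankel main term satisfies $m''(y)\ll M(\log y)^{\operatorname{Re}w-1}$. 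Since $B$ is nondecreasing,
\[
h\bigl(B(x+h)-B(x)\bigr)\le B_1(x+2h)-B_1(x+h)-B_1(x)+B_1(x-h)\ll Mh^2(\log x)^{\operatorname{Re}w-1}+Mx^2e^{-c\sqrt{\log x}},
\]
and taking $h\ge xe^{-(c/2)\sqrt{\log x}}$ gives the bound you need. Alternatively, apply the order-one formula to $A(x)$ itself and recover $A(x)$ by differencing; the differencing error is again at most $B(x+h)-B(x)$.

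This is also where $\delta>0$ enters. The growth condition in $P(z;c_0,\delta,M)$ should read $|G(s;z)|\le M(1+|\tau|)^{1-\delta}$; the exponent $1-\sigma$ in the paper is a misprint, and you inherited it. Under the correct condition you must take $T=\exp(\varepsilon\sqrt{\log x})$ with $\varepsilon$ small, so that $T^{1-\delta}$ is beaten by $x^{-c/\log T}$ on the vertical segment. In the paper's own applications $|a_n|\le 1$, so $b_n=1$ works and the short-interval bound is trivial; the gap concerns the lemma in the generality stated.

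Finally, your plan for uniformity in $N$ is off. Cauchy's inequality on discs of radius $\asymp 1/\log x$ gives only $|c_k|\le M(\log x)^k$, and no $N$-dependent Hankel radius is needed. Both $G(s;z)$ and $s^{-1}((s-1)\zeta(s))^z$ are holomorphic and $\ll M$ on a fixed disc $|s-1|\le\rho$. Hence $c_k\ll M\rho^{-k}$, and on a Hankel contour kept inside $|s-1|\le\rho/2$ the Taylor remainder is $\ll M(|s-1|/\rho)^{N+1}$. Integrating $|s-1|^{N+1-\operatorname{Re}z}x^{\sigma}$ over the contour gives at most a constant times $\Gamma(N+2-\operatorname{Re}z)\,x(\log x)^{\operatorname{Re}z-N-2}$. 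Stirling's formula then turns $\rho^{-N-1}\Gamma(N+2-\operatorname{Re}z)$ into $(c_2N+1)^{N+1}$. Likewise, the factorial growth of $\lambda_k(z)$ comes from $1/\Gamma(z-k)$, not from the $c_k$.
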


The next Lemma is due to Delange\cite{Delange1954}:
\begin{lemma}
    \label{2}
    Let $F(s):=\sum_{n=1}^{\infty}a_nn^{-s}$be a Dirichlet series with $a_n\geq0$, and converges for $\sigma>1$. Suppose $F(s)$ is holomorphic at all points of the line $\sigma=1$ except at $s=1$, and, in a neighborhood of $s=1$ in the half plane $\sigma>1$, we have:
    \begin{equation*}
        F(s)=(s-1)^{-w-1}\sum_{j=0}^{q}g_i(s)\left(log(\tfrac{1}{s-1})\right)^j+g(s),
    \end{equation*}
    where $w$ is a negative integer,and $g_i(s),g(s)$ are holomorphic at $s=1$, with $g_q(1)\neq0$.
    Then as $x\rightarrow\infty$
    \begin{equation*}
        A(x):=\sum_{n\leq x}a_n\sim (-1)^{-w-1}(-w-1)!\ q\ g_q(1) x(log\ x)^w(log\ log\ x)^{q-1}.
    \end{equation*}
\end{lemma}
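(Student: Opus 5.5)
The plan is to reduce the statement to a Wiener--Ikehara type Tauberian argument, exploiting $a_n\ge 0$. Put $k:=-w-1\ge 0$ and $L(s):=\log\frac{1}{s-1}$, so that near $s=1$
$$F(s)=(s-1)^{k}\sum_{j=0}^{q}g_j(s)L(s)^j+g(s).$$
Because $k$ is a nonnegative integer, $(s-1)^k$ carries no pole, and the only singularity is logarithmic. To create a genuine pole, I will differentiate $k+1$ times. This gives
$$(-1)^{k+1}F^{(k+1)}(s)=\sum_{n\ge1}a_n(\log n)^{k+1}n^{-s},$$
a Dirichlet series that still has nonnegative coefficients.

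\textbf{Step 1: the singular part of $F^{(k+1)}$.} I will compute it by Leibniz's rule, using $L'(s)=-(s-1)^{-1}$. Every derivative that falls on a holomorphic factor $g_j$, or on $(s-1)^k$ fewer than $k$ times, leaves a term that is $O(|s-1|^{-1}|L|^{q-2})$ or better. The dominant term comes from putting $k$ derivatives on $(s-1)^k$ (producing $k!$) and one on $L^q$ (producing $-q(s-1)^{-1}L^{q-1}$). Hence
$$(-1)^{k+1}F^{(k+1)}(s)=(-1)^{k}\,k!\,q\,g_q(1)\,\frac{L(s)^{q-1}}{s-1}\,\bigl(1+o(1)\bigr),$$
plus lower-order terms of the form $(s-1)^{-1}L^{i}$ with $i<q-1$ and a holomorphic part. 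The sign matches the constant $(-1)^{-w-1}(-w-1)!\,q\,g_q(1)$ in the statement. Holomorphy on $\sigma=1$, $s\ne1$, is preserved by differentiation.

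\textbf{Step 2: comparison series and Ikehara.} I will compare with an explicit nonnegative model $H(s)=\sum b_n n^{-s}$ whose singularity is exactly $(s-1)^{-1}L^{q-1}$ and whose summatory function is known to be $\sim x(\log\log x)^{q-1}$. One choice is $b_n=(\log\log n)^{q-1}$ for $n\ge 3$. Another is $q!$ times the coefficients of $\zeta(s)(\log\zeta(s))^{q-1}/(q-1)!$, whose partial sums can be computed directly by elementary counting.

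I then set $b_n':=a_n(\log n)^{k+1}$ and $C:=(-1)^k k!\,q\,g_q(1)$, and apply the Ikehara--Fej\'er kernel argument to $D(s):=\sum b'_n n^{-s}-C\,H(s)$. Concretely, I integrate $D(1+\epsilon+it)\,x^{it}$ against a Fej\'er kernel $\hat K_T(t)$ of compact support $[-T,T]$. On this support $D$ is continuous up to $\sigma=1$ away from $s=1$. Near $s=1$ the difference is $o(|s-1|^{-1}|L|^{q-1})$, and this is locally integrable against the kernel after multiplying by $x^{-1}(\log\log x)^{1-q}$. Positivity of $b'_n$ then lets me replace the smoothed sums by sharp ones in the standard Ikehara fashion, giving
$$\sum_{n\le x}a_n(\log n)^{k+1}\sim C\,x(\log\log x)^{q-1}.$$

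\textbf{Step 3: partial summation.} Writing $a_n=b'_n(\log n)^{-(k+1)}$ and integrating by parts gives
$$A(x)\sim C\,x(\log x)^{-k-1}(\log\log x)^{q-1}=C\,x(\log x)^{w}(\log\log x)^{q-1}.$$
The boundary integral is negligible because $\int^x(\log t)^{-k-2}(\log\log t)^{q-1}\,dt=o\bigl(x(\log x)^{-k-1}(\log\log x)^{q-1}\bigr)$.

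\textbf{Main obstacle.} I expect Step 2 to be the hard part. Classical Ikehara handles a simple pole with a residue, whereas here the main term carries the slowly varying factor $L^{q-1}$. The error $o(|s-1|^{-1}|L|^{q-1})$ is not continuous at $s=1$, so the Riemann--Lebesgue step must be replaced by a direct estimate of $\int (s-1)^{-1}L^{i}\,x^{it}\hat K_T(t)\,dt$. My first attempt will be to use a Hankel-type computation, or the explicit model $H$, showing that this integral is $\asymp(\log\log x)^{i}$. Then all terms with $i<q-1$, and the $o(\cdot)$ remainder, contribute $o((\log\log x)^{q-1})$ relative to the main term.
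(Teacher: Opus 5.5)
The paper does not prove this lemma; it quotes it from Delange's 1954 paper. It is the negative-integer case of Delange's Tauberian theorem, which tacitly needs $q\ge 1$. Your plan is a legitimate self-contained route: differentiate $k+1=-w$ times to reach a $(s-1)^{-1}L^{q-1}$ singularity, prove that case by an Ikehara argument, then remove the weight $(\log n)^{k+1}$ by partial summation. Step 3 is fine. Step 1, however, reaches the right constant for the wrong reason:
\begin{itemize}
\item the Leibniz term you keep carries a factor $\binom{k+1}{1}=k+1$ that you dropped;
\item the terms where $L^q$ receives $r\ge 2$ derivatives are \emph{not} lower order, since $\frac{d^r}{ds^r}L^q=(s-1)^{-r}\bigl((-1)^r(r-1)!\,q\,L^{q-1}+\cdots\bigr)$. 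For $k=1$, the term you keep is $2\cdot(-q)(s-1)^{-1}L^{q-1}$, while $(s-1)\frac{d^2}{ds^2}L^q$ contributes $+q(s-1)^{-1}L^{q-1}$.
\end{itemize}
Summing all contributions gives $k!\,q\sum_{r=1}^{k+1}(-1)^r\binom{k+1}{r}=-k!\,q$. Your $C$ is correct only because the two slips cancel.

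The genuine gap is Step 2, which carries all the content of the lemma (it is Delange's theorem with $\omega=0$). The Fej\'er-kernel argument needs the remainder after subtracting the model to be bounded on $\sigma=1+\epsilon$, near $t=0$, by a fixed integrable function of $t$. Only then can $\epsilon\to0^+$ be taken under the integral and Riemann--Lebesgue applied. A bound $o(|s-1|^{-1}|L|^{q-1})$ gives nothing of this kind. The function $(|t|\log(1/|t|))^{-1}$, and for $q\ge2$ even $|t|^{-1}$, satisfies it and is not integrable at $0$. Multiplying by the $t$-independent factor $x^{-1}(\log\log x)^{1-q}$ changes nothing.

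Moreover, with a single model $H$ the difference $D$ genuinely contains terms $c_i(s-1)^{-1}L^i$ with $0\le i\le q-2$. These come both from Step 1 and from $H$ itself, e.g.\ $\sum_{n\ge3}(\log\log n)\,n^{-s}=(s-1)^{-1}(L-\gamma)+\cdots$. The integral $\int (s-1)^{-1}L^i x^{it}\hat K_T(t)\,dt$ is not absolutely convergent on $\sigma=1$. So a ``direct estimate'' of it can only mean an $\epsilon\to0^+$ limit, which is again an explicit model computation.

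The missing idea is to keep the exact shape that Step 1 produces,
\[(-1)^{k+1}F^{(k+1)}(s)=(s-1)^{-1}\sum_{i=0}^{q}\tilde g_i(s)L^i+\tilde g(s),\qquad \tilde g_i\ \text{holomorphic at }1,\ \tilde g_q(1)=0,\ \tilde g_{q-1}(1)=C.\]
Then subtract $\sum_{i=0}^{q-1}c_iH_i$, where $H_i(s)=\sum_{n\ge3}(\log\log n)^i n^{-s}=(s-1)^{-1}\bigl(L^i+\sum_{j<i}\beta_{ij}L^j\bigr)+(\text{holomorphic})$. The system is triangular, so the $c_i$ (with $c_{q-1}=C$) can be chosen to cancel every constant $\tilde g_i(1)$.

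What remains is $\sum_i\frac{\tilde g_i(s)-\tilde g_i(1)}{s-1}L^i+\cdots=O\bigl((1+\log(1/|t|))^q\bigr)$, uniformly in $\epsilon$, and this is integrable. The Ikehara argument then goes through, with the de-smoothing adapted to a slowly varying main term and no a priori bound. The lower models contribute only $O\bigl(x(\log\log x)^{q-2}\bigr)$.
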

\section{Proof}
\subsection{Congruence restriction on the number of prime divisors}
\begin{proof}[Proof of Theorem \ref{a}]
For each $l_i$, let $e_i:=e^{\frac{2\pi i}{l_i}}$ denote a $l_i$-th root of unity. Define $J:=\{(j_1,\dots,j_m)|\ 0\leq j_i\leq l_i-1, j_i\in\mathbb{Z}\}.$

For each $j=(j_1,\dots,j_m)\in J$, define 
\begin{equation}\label{3}
    L_j(s):=\prod_{1\leq i\leq m}\prod_{\substack{p\  prime\\p\equiv m_i\text{ mod }M}}(1+e_i^{j_i}p^{-s}).
\end{equation}
Define $$L_0(s):=L_{(0,\dots,0)}(s)=\frac{\zeta(s)}{\zeta(2s)\prod_{\substack{p \  prime\\p|M}}(1+p^{-s})},$$
$$A_a:=\bigcup_{x=1}^{\infty}A_a(x),$$
$$L_{A_a}(s):=\sum_{n\in A_a}n^{-s}.$$
$L_{A_a}(s)$ can be written as a linear combination of $L_j(s)$:
$$L_{A_a}(s)=\frac{1}{L}\sum_{j\in J}\left(\prod_{1\leq i\leq m}e_i^{-a_i j_i}L_j(s)\right).$$
Especially, the coefficient of $L_0(s)$ is $\frac{1}{L}$ for all $a$.

%Define
%\begin{align*}
%        L_{A^a}^\star(s):=&\prod_{1\leq i\leq m}\prod_{\substack{p\  prime\\ p\equiv m_i\ \text{mod } M}}(1+p^{-s})^{e_i^{j_i}}\\
%=&\prod_{1\leq i\leq m}\left(\prod_{\substack{p\  prime\\ p\equiv m_i\ \text{mod } M}}\frac{1}{1-p^{-s}}(1-p^{-2s})\right)^{e_i^{j_i}}
%\end{align*}

%Since $\prod_{\substack{p\  prime\\ p\equiv m_i\ \text{mod } M}}(1-p^{-2s})$ converges for all $i$, and $\prod_{\substack{p\  prime\\ p\equiv m_i\ \text{mod } M}}\frac{1}{1-p^{-s}}$ has pole of order $\frac{1}{m}$, 
%$L_{A^a}(s)$ has pole of order $\frac{1}{m}\sum_{i=1}^{m}e_i^{j_i}$ at $s=1$.
Define$$L_j^\star(s):=\prod_{1\leq i\leq m}\prod_{\substack{p\  prime\\ p\equiv m_i\bmod M}}(1+p^{-s})^{e_i^{j_i}}.$$
Note that for a Dirichlet character $\chi$ mod $M$:
$$\log L(s,\chi) = \sum_{p,k} \frac{\chi(p^k)}{kp^{ks}} = 
\sum_p \frac{\chi(p)}{p^s} + \sum_{p,k\geq 2} \frac{\chi(p^k)}{kp^{ks}}.$$
The second term converges absolutely for $Re(s)>\frac{1}{2}$, hence in a neighborhood of $s=1$ we can write
$$\log L(s,\chi) = \sum_p \frac{\chi(p)}{p^s} + O(1).$$
Therefore we have
\begin{align*}
    \sum_{p \equiv m_i \bmod M} \frac{1}{p^s} =& 
\frac{1}{m} \sum_{p}\sum_{\chi} \frac{\chi(p)\overline{\chi}(m_i)}
{p^s}\\
= &
\frac{1}{m} \sum_{\chi}\overline{\chi}(m_i)\left(\sum_{p} \frac{\chi(p)}
{p^s}\right)
\\=&\frac{1}{m}\sum_{\chi \bmod M}  \overline{\chi}(m_i)\log L(s,\chi) + O(1).
\end{align*}

Where $\chi$ ranges over Dirichlet characters mod $M$.
For all nontrivial Dirichlet characters $\chi$, $L(s,\chi)$ converges at $s=1$ and $L(1,\chi)\neq 1$. For the trivial character $\chi_0$, $L(s,\chi_0)$ has a simple pole at $s=1$, that is to say for $Re(s)>1$:
$$log(L(s,\chi_0))=log\left(\frac{1}{s-1}\right)+O(1).$$
Hence 
$$log\left(L_j^\star(s)\right)=\frac{1}{m}\sum_{i=1}^{m}e_i^{j_i}log\left(\frac{1}{s-1}\right)+O(1).$$
Therefore, $L_j^\star(s)$ has a pole of order $\frac{1}{m}\sum_{i=1}^{m}e_i^{j_i}$ at $s=1$.

Notice that $\prod_{\substack{p\  prime\\ p\equiv m_i\bmod M}}(1+p^{-s})^{e_i^{j_i}}=\prod_{p}(1+e_i^{j_i}p^{-s}+O(p^{-2s}))$, where $O(p^{-2s})$ is bounded independently with $i$, therefore, $L_j(s)/L_j^\star(s)=\prod_{p}(1+O(p^{-2s}))$ which converges at $s=1$, hence $L_j$ has a pole of order $\frac{1}{m}\sum_{i=1}^{m}e_i^{j_i}$ at $s=1$ as well. In addition, the real part of the order of the pole is always $<1$ unless $j=(0,\dots,0)$.

Let $L_j(s)=\sum_{n=1}^{\infty} a_{j,n}n^{-s}$ be the Dirichlet series expansion, and let $\tilde{A}_{j}(x):=\sum_{n\leq x}a_{j,n}$. Then 
$$|A_a(x)|=\frac{1}{L}\sum_{j\in J}\prod_{1\leq i\leq m}e_i^{-a_i j_i}\tilde{A}_{j}(x).$$
We can now apply Lemma \ref{1}, to $L_j(s)$, and we have 
$$\tilde{A}_{j}(x)=O\left(x(log x)^{\frac{1}{m}\sum_{i=1}^{m}e_i^{j_i}-1}\right),$$
which $=o(x)$ when $j\neq (0,\dots,0)$.

$L_0(s)$ has a regular pole at $s=1$ with residue $\frac{1}{\zeta(2)\prod_{\substack{p \  prime\\p|M}}(1+p^{-1})}$ at $s=1$, 
hence we have:
$$|A_0(x)|=\frac{1}{\zeta(2)\cdot \prod_{\substack{p\ prime\\p|M}}(1+p^{-1})}\cdot x +o(x).$$
This gives 
$$|A_a(x)|=\frac{6}{\pi^2\cdot L\cdot \prod_{\substack{p\ prime\\p|M}}(1+p^{-1})}\cdot x +o(x).$$
\begin{remark}
    If we apply Lemma \ref{1} to all of $L_j(s)$ in $L_{A_a}(s)$, we can refine the error term to $O\left(x\cdot (log\ x)^{cos\left(\frac{2\pi}{max(l_i)_i}\right)-1}\right)$. However the power of $x$ in the error term can never be reduced below $1$ with this method alone. 
    Another result is that if we consider $|A_a(x)|-|A_a'(x)|$, the $L_0$ term cancels out and the resulting difference has to be of the form $x(log x)^{\frac{1}{m}\sum_{i=1}^{m}e_i^{j_i}-1}(C+O(\frac{1}{log (s-1)}))$. If given specific $a, a'$, we may directly calculate $C$, and $C\neq 0$ would imply the sign of $|A_a(x)|-|A_a'(x)|$ changes infinitely many times as $x\rightarrow \infty$. We conjecture that this is true for any distinct $a, a'$.
\end{remark}

\end{proof}
\subsection{Restriction of the number of prime divisors}

\begin{proof}[Proof of Theorem \ref{b}]
    Let $\hat{a}_i:=(0,\ldots,0,a_i,0,\dots,0)$, where $a_i$ is in the $i$-th component. 
    
    Define:
    \begin{align*}
        L_{A^a}(s):=&\sum_{n\in A^a}n^{-s},\\
        L_{A^{\hat{a}_i}}(s):=&\sum_{n\in A^{\hat{a}_i}}n^{-s}.
    \end{align*}
    Then we have: 
    $$L_{A^a}(s)=\prod_{i=1}^{m}\left(\sum_{n\in A^{\hat{a}_i}}n^{-s}\right)=\prod_{i=1}^{m}L_{A^{\hat{a}_i}}(s).$$
    Assume $a_i\geq 2$. Consider the Dirichlet series expansion for $\left(\sum_{p \equiv m_i \bmod M}\frac{1}{p^{s}}\right)^{a_i}$. $\forall n\in A^{\hat{a}_i}$, $n^{-s}$ has coefficient $a_i!$.
    On the other hand, the terms not in $ A^{\hat{a}_i}$ all contain $p^{-2s}$ for some $p \equiv m_i \bmod M$. This gives
    $$\left|\frac{1}{a_i!}\left(\sum_{p}\frac{1}{p^{s}}\right)^{a_i}-L_{A^{\hat{a}_i}}(s)\right|=O\left(\left(\sum_{p}\frac{1}{p^{2s}}\right)\left(\sum_{p}\frac{1}{p^{s}}\right)^{a_i-2}\right).$$
    Here all summations are over $p$ prime and $p \equiv m_i \bmod M$. We have
    $$L_{A^{\hat{a}_i}}(s)=\frac{1}{M^{a_i}a_i!}\left(log\ \frac{1}{s-1}\right)^{a_i} +O\left(\left(log\ \frac{1}{s-1}\right)^{a_i-2}\right).$$
    If $a_i=1$, then we have $$L_{A^{\hat{a}_i}}(s)=\sum_{p \equiv m_i \bmod M}\frac{1}{p^{s}}=\frac{1}{M}log\frac{1}{s-1}+O(1).$$
    In any case, we have$$L_{A^{\hat{a}_i}}(s)=\frac{1}{M^{a_i}a_i!}\left(log\ \frac{1}{s-1}\right)^{a_i} +O\left(\left(log\ \frac{1}{s-1}\right)^{a_i-1}\right).$$
    Hence $$L_{A^a}(s)=\prod_{i=1}^{m}L_{A^{\hat{a}_i}}(s)=\frac{1}{m^A\cdot\prod_{i=1}^{m}a_i!}\left(log\ \frac{1}{s-1}\right)^{A} +O\left(\left(log\ \frac{1}{s-1}\right)^{A-1}\right).$$
%      =&\prod_{i=1}^{m}\left(\frac{1}{a_i!}\left(\sum_{p \equiv m_i \bmod M} \frac{1}{p^s}\right)^{a_i}+O\left(\left(\sum_{n\in A^{\hat{a}_i}}n^{-s}\right)^{i-1}\right)\right)\\
%        =&\frac{1}{m^A\cdot\prod_{i=1}^{m}a_i!}(log\ \frac{1}{s-1})^{A} +O((log\ \frac{1}{s-1})^{A-1})
Apply Lemma \ref{2} to $L_{A^a}(a)$ and we have:
$${|A^a(x)|}\sim \frac{A}{a_1!\dots a_m!\cdot m^A}\cdot x \cdot (log\ x)^{-1}\cdot (log\ log\ x)^{A-1},\ x\rightarrow\infty.$$
\end{proof}

%\section{Generalization for $n$-free integers}
%A natural generalization of the result of \ref{a} is considering n-free integers. Here $n$ is a integer$\geq 2$, and $n_i$ will be the number of divisors that $\equiv m_i \bmod{M}$ counting with multiplicity.
%\begin{theorem}
    %Let $\{ l_i\}_{0\leq i \leq m}, l_i\in \mathbb{Z}_{\geq 1}$, and $a=(a_1,\dots,a_m)$ where $0\leq a_i\leq l_i-1$. Let $L:= \prod_{i=1}^{m} l_i$.
%Define:
%\begin{multline*}
    %A_a(x):=\{N\text{ is n free and coprime with }M.\\
    %n_i \equiv a_i \pmod{l_i},\quad N\leq x\}
%\end{multline*}
%Then:
%$${|A_a(x)|}\sim\frac{x}{\zeta(n) L}\prod_{\substack{p\ prime\\p|M}}\frac{1-p^{-1}}{1-p^{-ns}}, x\rightarrow\infty$$
%\end{theorem}
\section{Some possible Bias Among $A_a(X)$}\label{6}
Given $M$ and $l_i$, we consider the sign of $|A_a(x)|-|A_{a'}(x)|$, where $a$ and $a'$ are distinct. We'll show that if some $l_i\geq 3$, this sign changes infinitely many times, and if $l_i\leq 2$ for all $i$, we may expect the sign to become stable as $x\rightarrow \infty$. 
We shall justify this phenomenon for certain specific $M, l_i$.

\subsection{$\phi(M)\geq 3, l_i\leq 2$ for All $i$}\label{7}
%First we assume $m=\phi(M)\geq 3$.

After re-indexing, we may assume $l_1=\dots=l_h=2$, and $l_{h+1}=\dots=l_m=1$. If $h=0$, there would be only one element $a$ in $A$ and it makes no sense to talk about bias. So we may assume $h\geq 1$. 
We'll use $(\mathbb{Z}\slash2\mathbb{Z})^h$ to denote the elements of $A$, and let $A_1$ denote the subset of A with exactly one index being $1$, and the rest being $0$. 
For $L_j(s)$ defined in \eqref{3}, let $|j|$ denote the number of indices that are equal to $1$, then $L_j(s)$ has a pole of order $1-\frac{2|j|}{m}$ at $s=1$. Hence, among the collection of $L_j(s)$, $\{L_j(s)\}_{j\in A_1}$ has the second highest order pole of order $1-\frac{2}{m}>0$ after $L_0(s)$, which has a pole of order 1 at $s=1$.
%Especially, since $m\geq 3$, $1-\frac{2}{m}>0$. 

Given $1\leq c \leq h$, let $j_c\in A_1$ be the element whose $c$-th component equals $1$. Then 
$$L_{j_c}(s):=\left(\prod_{\substack{1\leq i\leq m\\i\neq c}}\prod_{\substack{p\  prime\\p\equiv m_i\text{ mod }M}}(1+p^{-s})\right)\prod_{\substack{p\  prime\\p\equiv m_c\text{ mod }M}}(1-p^{-s})$$
Let $\sum_{n=1}^{\infty}a_{j,n} n^{-s}$ be the Dirichlet series expansion of $L_{j_c}(s)$ and let $A_{j_c}(x):=\sum_{n\leq x} a_{j,n}$. By Lemma \ref{1} we have:
$$A_{j_c}(x)\sim c_c\cdot x(log\ x)^{-\frac{2}{m}},\ c_c=\frac{1}{\Gamma(1-\frac{2}{m})}\lim_{s\rightarrow 1}L_{j_c}(s)\zeta(s)^{-1+\frac{2}{m}}\neq 0$$
%where 

\begin{align*}
    c_c&=\frac{1}{\Gamma(1-\frac{2}{m})}\lim_{s\rightarrow 1}L_{j_c}(s)\zeta(s)^{-1+\frac{2}{m}}\\
    &=\frac{1}{\Gamma(1-\frac{2}{m})}\prod_{p|M}(1-p^{-1})^{1-\frac{2}{m}}\\
    &\lim_{s\rightarrow 1}\prod_{i\neq c}\prod_{p\equiv m_i(M)}\left((1+p^{-s})(1-p^{-s})^{1-\frac{2}{m}}\right)\cdot\prod_{p\equiv m_c(M)}(1-p^{-s})^{2-\frac{2}{m}}.
\end{align*}
We will need the following lemma for the purpose of numerically computing $c_c$.
\begin{lemma} \label{5}
We have the equality of the two following limits:
    \begin{align*}
         &\lim_{s\rightarrow 1}\prod_{i\neq c}\prod_{p\equiv m_i(M)}\left((1+p^{-s})(1-p^{-s})^{1-\frac{2}{m}}\right)\cdot\prod_{p\equiv m_c(M)}(1-p^{-s})^{2-\frac{2}{m}}\\
        =&\lim_{N\rightarrow \infty}\prod_{i\neq c}\prod_{\substack{p\equiv m_i(M)\\p\leq N}}\left((1+p^{-1})(1-p^{-1})^{1-\frac{2}{m}}\right)\cdot\prod_{\substack{p\equiv m_c(M)\\p\leq N}}(1-p^{-1})^{2-\frac{2}{m}}.
    \end{align*}
\end{lemma}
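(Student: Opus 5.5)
This is an Abelian-type statement: the product at $s\to1^+$ should equal the product at $s=1$ taken in increasing order of primes. I would work with logarithms and write the logarithm of the product as a Dirichlet series over primes,
\[
\log(\cdot)(s)=\sum_{p\nmid M} f_p(s),
\]
where $f_p(s)=\log(1+p^{-s})+(1-\tfrac{2}{m})\log(1-p^{-s})$ if $p\equiv m_i \pmod M$ with $i\neq c$, and $f_p(s)=(2-\tfrac{2}{m})\log(1-p^{-s})$ if $p\equiv m_c\pmod M$. Primes dividing $M$ lie in no class and contribute nothing.

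First I would expand each $f_p$ in powers of $p^{-s}$, giving $f_p(s)=\varepsilon(p)\,p^{-s}+r_p(s)$. Here $\varepsilon(p)=\tfrac{2}{m}$ for $p$ in a class $i\neq c$, $\varepsilon(p)=\tfrac{2}{m}-2$ for $p\equiv m_c$, and $|r_p(s)|\ll p^{-2\sigma}$ uniformly for $\sigma\ge 3/4$. The remainder series $\sum_p r_p(s)$ converges absolutely and uniformly on $[1,2]$, so it is continuous there. Its value as $s\to1^+$ is therefore its value at $s=1$, which is the limit of the partial sums over $p\le N$. This disposes of everything except the linear part.

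The crux is the linear part $D(s):=\sum_p \varepsilon(p)p^{-s}$. The coefficients satisfy $\varepsilon(p)=\tfrac{2}{m}\mathbf 1_{p\nmid M}-2\cdot\mathbf 1_{p\equiv m_c}$. Using orthogonality of characters exactly as in the proof of Theorem \ref{a},
\[
\varepsilon(p)=\tfrac{2}{m}\chi_0(p)-\tfrac{2}{m}\sum_\chi \overline{\chi}(m_c)\chi(p)=-\tfrac{2}{m}\sum_{\chi\neq\chi_0}\overline{\chi}(m_c)\chi(p).
\]
The $\chi_0$ terms cancel, which is precisely why the exponents were chosen, so the pole order is $1-\tfrac{2}{m}$. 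Thus $D(s)=-\tfrac{2}{m}\sum_{\chi\ne\chi_0}\overline\chi(m_c)P_\chi(s)$ with $P_\chi(s)=\sum_p\chi(p)p^{-s}$.

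It then suffices to show, for each nontrivial $\chi$, that $\lim_{s\to1^+}P_\chi(s)=\sum_p\chi(p)/p$, with the right side convergent when primes are taken in increasing order. I would invoke the prime number theorem in arithmetic progressions, $\sum_{p\le x}\chi(p)\log p=O(x e^{-c\sqrt{\log x}})$, which follows from the zero-free region implicit in the Selberg–Delange input of Lemma \ref{1}. Partial summation then gives $S(x):=\sum_{p\le x}\chi(p)=O(x(\log x)^{-2})$. Since $\sum_{p\le N}\chi(p)p^{-s}=S(N)N^{-s}+s\int_2^N S(t)t^{-s-1}\,dt$ and $|S(t)t^{-s-1}|\ll t^{-1}(\log t)^{-2}$ uniformly for $s\ge1$, dominated convergence shows that $P_\chi(s)$ is continuous on $[1,2]$ as a function of $s$. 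Its value at $s=1$ is the convergent series $\sum_p\chi(p)/p$, which is $\log L(1,\chi)$ minus the absolutely convergent prime-power terms.

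Finally I would combine the two parts. Each piece of $\sum_p f_p(s)$ is continuous from the right at $s=1$, with value equal to $\lim_{N}\sum_{p\le N}f_p(1)$, so exponentiating gives the claimed equality. The main obstacle is the conditional convergence in the third step: the linear term is not absolutely convergent at $s=1$, so one genuinely needs cancellation in $\sum\chi(p)$. A bare Abelian theorem would also suffice (if $\sum a_p/p$ converges then $\sum a_p p^{-s}\to\sum a_p/p$ as $s\to1^+$), but that requires knowing the convergence of $\sum\chi(p)/p$, which itself comes from the PNT in progressions.
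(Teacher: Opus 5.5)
Your proof is correct and follows essentially the same route as the paper. Both arguments take logarithms, discard the absolutely convergent $O(p^{-2s})$ part, and control the linear part by partial summation, using a prime number theorem in progressions (the paper cites Siegel--Walfisz) to get coefficient partial sums of size $O(x(\log x)^{-2})$ and hence uniform convergence on $[1,2]$. Your character expansion $\varepsilon(p)=-\tfrac{2}{m}\sum_{\chi\neq\chi_0}\overline{\chi}(m_c)\chi(p)$ is a repackaging of the paper's observation that $(m-1)\cdot\tfrac{2}{m}+(\tfrac{2}{m}-2)=0$, so the main terms $\tfrac{1}{\phi(M)}Li(k)$ cancel in $s_k$.
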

\begin{proof}
    Define 
    $$F(s):=\prod_{i\neq c}\prod_{p\equiv m_i(M)}\left((1+p^{-s})(1-p^{-s})^{1-\frac{2}{m}}\right)\cdot\prod_{p\equiv m_c(M)}(1-p^{-s})^{2-\frac{2}{m}}.$$
    Then we have
    $$log\ F(s)=\frac{2}{m}\sum_{\substack{i\neq c\\p\equiv m_i (M)}}p^{-s}+\left(\frac{2}{m}-2\right)\sum_{p\equiv m_c(M)}p^{-s}+\sum O(p^{-2s}).$$
Here $\sum O(p^{-2s})$ converges absolutely on $Re(s)>\frac{1}{2}$, hence we can exchange $\lim_{N\rightarrow \infty}\sum_{n\leq N}$ with $\lim_{s\rightarrow 1}$.

Let $a_n=\begin{cases}
    \frac{2}{m} & \text{if } n\ \text{prime and}\ n\equiv m_i (M) \text{ for some } i\neq c\\
    \frac{2}{m}-2   & \text{if } n\ \text{prime and}\ n\equiv m_c (M)\\
    0 & \text{otherwise}
\end{cases}$

Let $$L(s):=\sum_{n=1}^{\infty}\frac{a_n}{n^s}=\frac{2}{m}\sum_{\substack{i\neq c\\p\equiv m_i (M)}}p^{-s}+\left(\frac{2}{m}-2\right)\sum_{p\equiv m_c(M)}p^{-s}.$$

We need to prove: $L(s)$ can be extended to $s=1$, and 
$$\lim_{s\rightarrow 1^+}L(s)=\lim_{N\rightarrow \infty}\left(\frac{2}{m}\sum_{\substack{p\equiv m_i (M),\ i\neq c\\p\leq N}}p^{-1}+\left(\frac{2}{m}-2\right)\sum_{\substack{p\equiv m_c (M)\\p\leq N}}p^{-1}\right).$$

Define 
\begin{align*}
    s_k&:=\sum_{n=1}^{k}a_n\\
    &=\frac{2}{m}\sum_{i\neq c}\#\left\{p\leq k |\ p\ prime,\ p \equiv m_i\ (\bmod{M})\right\}+\\
    &\left(\frac{2}{m}-2\right)\#\left\{p\leq k |\ p\ prime,\ p \equiv m_c\ (\bmod{M})\right\}
%    &=\frac{2}{m}\#\{primes\equiv m_i(M)\leq k, \ i\neq c\}-\left(\frac{2}{m}-2\right)\#\{primes\equiv m_c(M)\leq k\}
\end{align*}
By the Siegel-Walfisz theorem\cite{Walfisz1936}, we have
$$\#\left\{p\leq k |\ p\ prime,\ p \equiv m_i\ (\bmod{M})\right\}=\frac{1}{\phi(M)}Li(k)+O(k\cdot \left(log\ k\right)^{-2}).$$ 
This gives $s_k=O(k\cdot \left(log\ k\right)^{-2})$.
%Assuming Riemman Hypothesis, we have $\#\{primes\ \equiv m_i(M)\ \leq k\}=\frac{1}{\phi(M)}Li(k)+O(k^{{\frac{1}{2}}+\epsilon})$ for any $\epsilon>0$. This gives $s_k=O(k^{\frac{1}{2}+\epsilon})$.

\begin{align*}
    \sum_{k=1}^{N}\frac{a_k}{k^s}&=\sum_{k=1}^{N}\frac{s_k-s_{k+1}}{k^s}\\
    &=\sum_{k=1}^{N-1}s_k\left\{\frac{1}{k^s}-\frac{1}{(k+1)^s}\right\}+\frac{s_N}{N^s}\\
    &=\sum_{k=1}^{N-1}f_k(s)+\frac{s_N}{N^s}.
\end{align*}
Where $f_k(s):=s_k\left\{\frac{1}{k^s}-\frac{1}{(k+1)^s}\right\}$, and by the mean value theorem we have:
$$\left|f_k(s)\right|\leq c\cdot s\cdot k^{-s}\cdot\left(log\ k\right)^{-2},$$
%$$\left|f_k(s)\right|\leq c\cdot s\cdot k^{-s-\frac{1}{2}+\epsilon}$$
where $c$ is a positive constant.
%This means when $Re(s)>\frac{1}{2}+\epsilon$, $\sum f_k(s)$ converges absolutely and uniformally. 
This means $\sum f_k(s)$ converges uniformally for $s\in[1,2]$. Hence we can exchange the limit and valuation at $s=1$.
\end{proof}
In particular, we have $c_c>0$, since both $log(L_{j_c}(s))$ and $log(\zeta(s))$ are real when $s>1$.

Given $a=(a_1,\dots,a_h)\in A$, we have 
\begin{equation}\label{4}
    L_{A_a}(s)=\frac{1}{2^h}\left(L_0(s)+\sum_{i=1}^h (-1)^{a_i}L_{j_i}(s)+o((s-1)^{-1+\frac{2}{m}})\right).
\end{equation}

Hence for distinct $a, a'$, $L_{A_a}(s)-L_{A_a'}(s)$'s $L_0(s)$ term cancels out, but $L_j(s)$, $j\in A_1$ terms need not. If they don't, the sign of the leading term in the asymptotic expansion of $|A_a(x)|-|A_{a'}(x)|$ is fixed. 

Note that, for $a=(0,\dots, 0)$, the coefficient of $L_{j_i}(s)$ in \eqref{4} is maximal for every $i$, while for $a=(1,\dots,1)$,  this coefficient is minimal. We deduce the following result.
\begin{proposition}
     For all $a\neq(0,\dots,0)$, $|A_{(0,\dots,0)}(x)|-|A_{a}(x)|$ is positive when $x$ is sufficiently large. For all $a\neq(1,\dots,1)$, $|A_{(1,\dots,1)}(x)|-|A_{a}(x)|$ is negative when $x$ is sufficiently large.
\end{proposition}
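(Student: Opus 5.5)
The plan is to work at the level of partial sums and compare the expansions of $|A_{(0,\dots,0)}(x)|$ and $|A_a(x)|$ term by term. Inverting the Dirichlet series identity \eqref{4} coefficientwise gives
$$|A_a(x)|=\frac{1}{2^h}\sum_{j\in J}(-1)^{a\cdot j}\tilde{A}_j(x),$$
where $a\cdot j=\sum_i a_ij_i$ and $J=(\mathbb{Z}/2\mathbb{Z})^h$. For two elements $a,a'$ of $A$, the $j=0$ term cancels in the difference, and
$$|A_a(x)|-|A_{a'}(x)|=\frac{1}{2^h}\sum_{c=1}^{h}\bigl((-1)^{a_c}-(-1)^{a'_c}\bigr)A_{j_c}(x)+\frac{1}{2^h}\sum_{|j|\geq 2}\bigl((-1)^{a\cdot j}-(-1)^{a'\cdot j}\bigr)\tilde{A}_j(x).$$

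For the first sum, Lemma \ref{1} applied to each $L_{j_c}$ gives $A_{j_c}(x)=c_c\,x(\log x)^{-2/m}(1+o(1))$. The constant $c_c$ is positive: by Lemma \ref{5} the limit defining it is a convergent product of positive reals, and $\Gamma(1-\frac{2}{m})>0$ because $m\geq 3$.

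For the second sum, when $|j|\geq 2$ the pole order $1-\frac{2|j|}{m}$ is real, so Lemma \ref{1} gives
$$\tilde{A}_j(x)=O\bigl(x(\log x)^{-2|j|/m}\bigr)=o\bigl(x(\log x)^{-2/m}\bigr).$$
To apply Lemma \ref{1} here, I need the majorant hypothesis $T(z,w;\dots)$. The coefficients of $L_j$ are bounded in absolute value by those of $L_0$, which satisfies $P(1;\dots)$. The property $P(z;\dots)$ for $L_j$ itself comes from writing $L_j(s)\zeta(s)^{-z}$ as a product of $L(s,\chi)^{\text{powers}}$ times an absolutely convergent Euler product on $\sigma>1/2$, together with the classical zero-free region. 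I will take this verification for granted, as the paper does.

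Now take $a'=(0,\dots,0)$ and $a\neq 0$. Then $(-1)^{a'_c}-(-1)^{a_c}$ equals $2$ when $a_c=1$ and $0$ otherwise, and there is at least one $c$ with $a_c=1$. Hence
$$|A_{0}(x)|-|A_a(x)|=\frac{1}{2^{h-1}}\Bigl(\sum_{c:\,a_c=1}c_c+o(1)\Bigr)x(\log x)^{-2/m}.$$
The bracketed constant is strictly positive, so the difference is positive for large $x$. The case $a'=(1,\dots,1)$ is symmetric: the coefficients become $-2$ at the indices with $a_c=0$, and at least one such $c$ exists because $a\neq(1,\dots,1)$. So that difference is eventually negative.

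The main obstacle is making the error term honest. The $o((s-1)^{-1+2/m})$ in \eqref{4} is a statement about Dirichlet series near $s=1$, and it does not directly transfer to partial sums. I will therefore avoid using \eqref{4} as written and instead apply Lemma \ref{1} separately to each of the finitely many $L_j$, as above. That reduces the work to checking the $P$ and $T$ hypotheses uniformly over $j$, together with the positivity of $c_c$, both of which the preceding material supplies.
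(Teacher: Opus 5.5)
Your proof is correct and follows the paper's argument. The $L_0$ contribution cancels in the difference, the $|j|=1$ terms $A_{j_c}(x)\sim c_c\,x(\log x)^{-2/m}$ with $c_c>0$ dominate, and their coefficients $(-1)^{a_c}$ are simultaneously maximal at $a=(0,\dots,0)$ and minimal at $a=(1,\dots,1)$; your one refinement, applying Lemma~\ref{1} to each $L_j$ separately at the level of partial sums rather than transferring the $o\bigl((s-1)^{-1+2/m}\bigr)$ term of the Dirichlet-series identity, makes rigorous a step the paper leaves informal.
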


For given $M, l_i$, we can always justify whether or not there is a bias by numeric method. We describe the method with the following example:

Let $M=12$. Then $m=4$. Let $m_1=1,\ m_2=5,\ m_3=7,\ m_4=11$. Let $l_1=...=l_4=2$. Let $a=(1,0,0,0)$ and $a'=(0,1,0,0)$. Then $L_{A_a}(s)-L_{A_a'}(s)=\frac{1}{16}(-L_{(1,0,0,0)}(s)+L_{(0,1,0,0)}(s)+o(x^{\frac{1}{2}}))$. 
%\textcolor{blue}{A_(1,\dots,0)(x)}
We have:$$A_{j_i}(x)\sim c_i\cdot x(log\ x)^{-\frac{1}{2}},\ 1\leq i\leq 4$$
Where:
$$c_1=\frac{1}{\Gamma(\frac{1}{2})}\prod_{p|12}(1-p^{-1})^{\frac{1}{2}}\lim_{N\rightarrow \infty}\prod_{\substack{p\not\equiv 1(12)\\(p,12)=1\\p\leq N}}\left((1+p^{-1})(1-p^{-1})^{\frac{1}{2}}\right)\cdot\prod_{\substack{p\equiv 1(12)\\p\leq N}}(1-p^{-1})^{\frac{3}{2}}$$
$$c_2=\frac{1}{\Gamma(\frac{1}{2})}\prod_{p|12}(1-p^{-1})^{\frac{1}{2}}\lim_{N\rightarrow \infty}\prod_{\substack{p\not\equiv 5(12)\\(p,12)=1\\p\leq N}}\left((1+p^{-1})(1-p^{-1})^{\frac{1}{2}}\right)\cdot\prod_{\substack{p\equiv 5(12)\\p\leq N}}(1-p^{-1})^{\frac{3}{2}}$$
%$$\lim_{s\rightarrow 1}\sqrt{\frac{\prod_{\substack{p\not\equiv 1(12)\\(p,12)=1}}(1+p^{-s})\prod_{p\equiv 1(12)}(1-p^{-s})^{2}}{\prod_{p\equiv 1(12)}(1+p^{-s})}}\thickapprox 1.23232$$
Denote $c=\frac{1}{\Gamma(\frac{1}{2})}\prod_{p|12}(1-p^{-1})^{\frac{1}{2}}$, and $\bar{c}_i:=\frac{c_i}{c}$, the result for the computation is as follows (where we choose $N=p_m$, and $p_m$ is the $m$-th prime number):\\
\begin{center}
\begin{tabular}{ |c|c|c|c|c|c| } 
 \hline
$m$ & 100 & 1,000 & 10,000 & 100,000 & 1,000,000 \\ 
 $\bar{c}_1$ & 1.16546 & 1.17505 & 1.1763 & 1.1766 & 1.17678 \\ 
 $\bar{c}_2$ & 0.744375 & 0.740081 & 0.740074 & 0.740005 & 0.739976 \\ 
 \hline
\end{tabular}
\end{center}

%Its convergence is guaranteed by Merten's theorem for arithmetic progressions. \cite{keliher2024constantsmertenstheoremsprimes}

This shows that $|A_a(x)|-|A_{a'}(x)|$ is always negative when $x$ is sufficiently large.
We will write $a<a'$ in this case. 

By computing to the one million-th prime, we have calculated that $\bar{c}_3=0.854448,\ \bar{c}_4= 1.01913$, and we have the following result:
\begin{align*}
    &(1,1,1,1)<(1,0,1,1)<(1,1,0,1)<(1,1,1,0)<\\
    &(0,1,1,1)<(1,0,0,1)<(1,0,1,0)<(1,1,0,0)<\\
    &(0,0,1,1)<(0,1,0,1)<(0,1,1,0)<(1,0,0,0)<\\
    &(0,0,0,1)<(0,0,1,0)<(0,1,0,0)<(0,0,0,0).
\end{align*}

We have also computated the case for $M=5,\ 8,\ 10$, and $l_i=2$ for all $i$. Here $\bar{c}_i$ is defined similarly in the above case, where
$$A_{j_i}(x)\sim \frac{1}{\Gamma(\frac{1}{2})}\prod_{p|M}(1-p^{-1})^{\frac{1}{2}}\bar{c}_i\cdot x(log\ x)^{-\frac{1}{2}},\ 1\leq i\leq 4$$
and all calculations are done to the one million-th prime.

When $M=5$, let $m_1=1,\ m_2=2,\ m_3=3,\ m_4=4$. We have the following result:
$$\bar{c}_1=1.14715,\ \bar{c}_2=0.30951,\ \bar{c}_3=0.557665,\ \bar{c}_4=1.28255,$$
\begin{align*}
    &(1,1,1,1)<(1,0,1,1)<(1,1,0,1)<(1,0,0,1)<\\
    &(0,1,1,1)<(1,1,1,0)<(0,0,1,1)<(1,0,1,0)<\\
    &(0,1,0,1)<(1,1,0,0)<(0,0,0,1)<(1,0,0,0)<\\
    &(0,1,1,0)<(0,0,1,0)<(0,1,0,0)<(0,0,0,0).
\end{align*}

When $M=8$, let $m_1=1,\ m_2=3,\ m_3=5,\ m_4=7$. We have the following result:
$$\bar{c}_1=1.43848,\ \bar{c}_2=0.570731,\ \bar{c}_3=0.807147,\ \bar{c}_4=1.01717,$$
\begin{align*}
    &(1,1,1,1)<(1,0,1,1)<(1,1,0,1)<(1,1,1,0)<\\
    &(1,0,0,1)<(0,1,1,1)<(1,0,1,0)<(1,1,0,0)<\\
    &(0,0,1,1)<(0,1,0,1)<(1,0,0,0)<(0,1,1,0)<\\
    &(0,0,0,1)<(0,0,1,0)<(0,1,0,0)<(0,0,0,0).
\end{align*}

When $M=10$, let $m_1=1,\ m_2=3,\ m_3=7,\ m_4=9$. We have the following result:
$$\bar{c}_1=1.23125,\ \bar{c}_2=0.598548,\ \bar{c}_3=0.996604,\ \bar{c}_4=1.37658,$$
\begin{align*}
    &(1,1,1,1)<(1,0,1,1)<(1,1,0,1)<(0,1,1,1)<\\
    &(1,1,1,0)<(1,0,0,1)<(0,0,1,1)<(1,0,1,0)<\\
    &(0,1,0,1)<(1,1,0,0)<(0,1,1,0)<(0,0,0,1)<\\
    &(1,0,0,0)<(0,0,1,0)<(0,1,0,0)<(0,0,0,0).
\end{align*}

We conjecture that there is always a strict bias between $|A_a(x)|$ and $|A_{a'}(x)|$ when $a$ and $a'$ are distinct, that is, there is a strict total ordering on $A=(\mathbb{Z}\slash2\mathbb{Z})^h$. Obviously, $(0,\dots,0)$ is the largest element in this ordering and $(1,\dots,1)$ is the smallest element in this ordering.
%Since we have $L_{A_a}(s)=\frac{1}{L}\sum_{j\in J}\left(\prod_{1\leq i\leq m}e_i^{-a_i j_i}L_j(s)\right)$, each A
\subsection{$l_i\geq 3$ for Some $i$}
Let $l$ be the largest value among ${l_i}$. Then the second highest order pole among $L_j(s)$ would be $\frac{1}{m}((m-1)+e_j)$ and $\frac{1}{m}((m-1)+\bar{e}_j)$ (by highest we mean the value of its real part being largest). 

Hence by \ref{1} we know that $A_j\sim c_j\cdot xRe((log\ x)^{\frac{e_j-1}{m}})$. Just like 4.1, this implies unless the coefficients happen to cancel out, we can expect the sign of $|A_a(x)|-|A_{a'}(x)|$ to change infinitely many times.

We'll consider the following example: 

Let $M=4$, $m_1=1$, $m_2=3$, let $l_1=4$, and $l_2=1$. Let $a=(1,0)$ and $a' =(3,0)$. Then $L_{A_a}(s)-L_{A_a'}(s)=\frac{i}{2}(L_{(3,0)}(s)-L_{(1,0)}(s))=Im(L_{1,0}(s))$.
$$|A_a(x)|-|A_{a'}(x)|\sim Im\left(\frac{1}{\Gamma(\frac{1+i}{2})}\cdot c\cdot x(log\ x)^{\frac{-1+i}{2}}\right)$$
where\begin{align*}
    c&=(\frac{1}{2})^{\frac{1+i}{2}}\lim_{s\rightarrow 1}\prod_{p\equiv 1(4)}((1+ip^{-s})(1-p^{-s})^{\frac{1+i}{2}})\prod_{p\equiv 3(4)}((1+p^{-s})(1-p^{-s})^\frac{1+i}{2})
\end{align*}
As $c\neq 0$, the sign of $|A_a(x)|-|A_{a'}(x)|$ changes infinitely many times as $x\rightarrow \infty$. 

We conjecture that the sign of $|A_a(x)|-|A_{a'}(x)|$ changes infinitely many times for all distinct $a$, $a'$. 
\section*{Acknowledgments}
The author would like to thank Xi Ping for sharing the knowledge on Selberg-Delange method for this problem, and his advisor Wu Han for his guidance and advice on the writing of this paper.

\printbibliography

\end{document}